\documentclass[11pt]{amsart}

\usepackage{fullpage}
\usepackage[utf8]{inputenc}
\usepackage{amsmath,amssymb,amsfonts,amsaddr}

\usepackage[linkcolor=red,colorlinks=true]{hyperref}
\usepackage{cleveref}

\usepackage{biblatex}

\numberwithin{equation}{section}
\newtheorem{theorem}{Theorem}[section]

\newtheorem{conjecture}[theorem]{Conjecture}

\newcommand{\Z}{\mathbb{Z}}
\newcommand{\E}{\mathrm{e}}

\newcommand{\D}[1]{\mathop{\mathrm{d}#1}}

\allowdisplaybreaks

\title{Evaluation of the symmetrized Mordell-Tornheim zeta function}
\author{Przemysław Dobrowolski}
\address{Warsaw, Poland}
\email{pdobrowo@gmail.com}

\subjclass{11M41}
\keywords{Mordell-Tornheim zeta function}

\begin{document}


\begin{abstract}
In this paper we evaluate the symmetrized Mordell-Tornheim zeta function defined as
\begin{equation*}
\overline{\zeta}_n(w_1, \ldots, w_n) = \sum_{\substack{a_1, \ldots, a_n \in \Z^* \\ a_1 + \ldots + a_n = 0}} \frac{1}{\left| a_1^{w_1} \cdots a_n^{w_n} \right|}
\end{equation*}
where $n \ge 1$ is a positive integer representing \emph{depth} and $w_1, \ldots, w_n \ge 1$ are positive integers representing \emph{weight} $w = w_1 + \ldots + w_n$ of the function.

An equivalent formulation is
\begin{equation*}
\overline{\zeta}_{n+1}(w_1, \ldots, w_{n+1}) = \sum_{\substack{a_1, \ldots, a_n \in \Z^* \\ \sum_{k=1}^n a_k \ne 0}} \frac{1}{\left| a_1^{w_1} \cdots a_n^{w_n} \left( a_1 + \ldots + a_n \right)^{w_{n+1}} \right|}
\end{equation*}
which shows the function represents the fully symmetrized case of the Mordell-Tornheim zeta function:
\begin{equation*}
\zeta_{MT,n}(w_1, \ldots, w_n; w_{n+1}) = \sum_{a_1, \ldots, a_n \in \Z^+} \frac{1}{ a_1^{w_1} \cdots a_n^{w_n}(a_1 + \ldots + a_n)^{w_{r+1}}}
\end{equation*}
Compared to the classical Mordell-Tornheim zeta function $\zeta_{MT,n}(w_1, \ldots, w_n; w_{n+1})$ which is restricted to the positive orthant (hyperoctant), the symmetrized one spans the entire $(n-1)$-dimensional hyperplane.

We show that when the depth and the weight of the function are equal, that is for $\overline{\zeta}_n(1, \ldots, 1)$, it has a remarkably simple representation in terms of standard functions:
\begin{equation*}
\overline{\zeta}_n(1, \ldots, 1) = B_n(f^{(1)}(0), \ldots, f^{(n)}(0))
\end{equation*}
where $B_n$ is $n$-th complete exponential Bell polynomial and $f^{(n)}(0)$ is $n$-th derivative at $x=0$ of function $f(x)$ defined as:
\begin{equation*}
f(x) = \ln \binom{-2x}{-x}
\end{equation*}
Additionally, we show the value can be expressed using the following polynomials with positive integer coefficients over the values of zeta function:
\begin{equation*}
\overline{\zeta}_n(1, \ldots, 1) = B_n(0, (2^2 - 2) \Gamma(2) \zeta(2), \ldots, (2^n - 2) \Gamma(n) \zeta(n))
\end{equation*}
or equivalently, over the values of eta function:
\begin{equation*}
\overline{\zeta}_n(1, \ldots, 1) = B_n(0, 2^2 \Gamma(2) \eta(2), \ldots, 2^n \Gamma(n) \eta(n))
\end{equation*}
The list of explicit values for small $1 \le n \le 10$ is available in the appendix.
\end{abstract}

\maketitle


\section{Introduction}
In the recent years, different variants of multiple zeta values (MZV) have been studied extensively. Significant portion of the research contributes to a better understanding of the internal structure of the MZV values. Some of the research has provided the exact values of multiple zeta values for special cases however, generic formulas are not known.

The Mordell-Tornheim zeta function has been introduced by Matsumoto in \cite{Matsumoto2003} as
\begin{equation*}
\zeta_{MT,n}(w_1, \ldots, w_n; w_{n+1})=\sum_{a_1, \ldots, a_n \in \Z^+} \frac{1}{ a_1^{w_1} \cdots a_n^{w_n}(a_1 + \ldots + a_n)^{w_{r+1}}}
\end{equation*}
based on the classical papers from Tornheim \cite{Tornheim1950} and Mordell \cite{Mordell1958}. The function was initially evaluated for the depth 2 (double zeta function) in \cite{Matsumoto2003} and later extended for the depth 3 (triple zeta-function) in \cite{MatsumotoEtal2008}. The generic case of $r$-ple zeta-function was discussed in \cite{MatsumotoTsumura2006} as a special case of Witten multiple zeta-function.

A symmetric variant of Mordell-Tornheim function was introduced and evaluated by Bachmann in \cite{Bachmann2021}. The author obtained a new formula by summing the classical Mordell-Tornheim function over all permutations of the argument indices.

In this paper we introduce and evaluate a different symmetrized variant of Mordell-Tornheim function. Instead of summing up the classical Mordell-Tornheim functions, the new function spans the whole domain only excluding the singularities. The result unveils that the values can be expressed as the values of polynomials with non-negative integer coefficients over consecutive zeta or eta values.

\section{Evaluation of the symmetrized Mordell-Tornheim zeta function}

Let the symmetrized Mordell-Tornheim zeta function $\overline{\zeta}_n(w_1, \ldots, w_n)$ be defined as
\begin{equation}\label{eqn_generic_def}
\overline{\zeta}_n(w_1, \ldots, w_n) = \sum_{\substack{a_1, \ldots, a_n \in \Z^* \\ \sum_{k=1}^n a_k = 0}} \frac{1}{\left| \prod_{k=1}^n a_k^{w_k} \right|}
\end{equation}
for $n \ge 1$ and $w_1, \ldots, w_n \ge 1$.

By moving the condition $\sum_{k=1}^n a_k = 0$ into the sum as $a_n = -\sum_{k=1}^{n-1} a_k \in \Z^*$, the function becomes
\begin{equation}\label{eqn_generic_def_expanded}
\overline{\zeta}_n(w_1, \ldots, w_n) = \sum_{\substack{a_1, \ldots, a_{n-1} \in \Z^* \\ \sum_{k=1}^{n-1} a_k \ne 0}} \frac{1}{ \left| \prod_{k=1}^{n-1} a_k^{w_k} \left( \sum_{k=1}^{n-1} a_k \right)^{w_n} \right|}
\end{equation}
which shows that compared to Mordell-Tornheim zeta function, the newly introduced function extends the summation domain to the whole $n-1$ dimensional subspace excluding the singularities and it sums the absolute values of the elements instead of the signed values.

The first step is to determine the convergence of the sum for all values of $w_k$.

\begin{theorem}
The symmetrized Mordell-Tornheim function $\overline{\zeta}_n(w_1, \ldots, w_n)$ converges for all parameters $w_1, \ldots, w_n \ge 1$.
\end{theorem}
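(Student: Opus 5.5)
Since every term is positive and $|a|^{-w}\le |a|^{-1}$ for integers $a\ne 0$ and $w\ge 1$, the sum $\overline{\zeta}_n(w_1,\ldots,w_n)$ is bounded termwise by $\overline{\zeta}_n(1,\ldots,1)$. It therefore suffices to prove convergence in the case $w_1=\cdots=w_n=1$. The case $n=1$ is an empty sum, since $a_1=0$ is excluded. For $n=2$ we have $a_2=-a_1$, so the sum is $\sum_{a\ne 0} a^{-2}=2\zeta(2)$. From now on assume $n\ge 2$ and set
\begin{equation*}
S_n(m)=\sum_{\substack{a_1,\ldots,a_{n-1}\in\Z^* \\ a_1+\cdots+a_{n-1}=m}} \frac{1}{|a_1\cdots a_{n-1}|}, \qquad m\in\Z .
\end{equation*}
In this notation \eqref{eqn_generic_def_expanded} reads $\overline{\zeta}_n(1,\ldots,1)=\sum_{m\ne 0} S_n(m)/|m|$.

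The key step is the bound $S_n(m)\le C_n (1+\ln|m|)^{n-2}/|m|$ for $m\ne 0$, which I prove by induction on $n$. The base case is $S_2(m)=1/|m|$. For the induction step, write
\begin{equation*}
S_{n+1}(m)=\sum_{a\ne 0,\,m} \frac{S_n(m-a)}{|a|} .
\end{equation*}
This reduces everything to the one-dimensional convolution estimate
\begin{equation*}
\sum_{a\ne 0,\,m} \frac{(1+\ln|m-a|)^{k}}{|a|\,|m-a|} \ll_k \frac{(1+\ln|m|)^{k+1}}{|m|} .
\end{equation*}
To prove it I use the partial fraction identity
\begin{equation*}
\frac{1}{|a||m-a|}\le \frac{1}{|m|}\left(\frac{1}{|a|}+\frac{1}{|m-a|}\right),
\end{equation*}
which holds because $|m|\le |a|+|m-a|$. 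This splits the sum into two pieces.

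The piece with $1/|m-a|$ is harmless: summing $(1+\ln|b|)^k/|b|^{2}$-type terms is not what arises, though, so care is needed. What actually appears in both pieces is $(1+\ln|m-a|)^k/|a|$ or $(1+\ln|m-a|)^k/|m-a|$, each summed over all $a$, and these sums diverge without the remaining decay. So I will not apply the split globally. Instead I divide the range into three regions: $|a|\le 2|m|$, $|m-a|\le 2|m|$, and the tail where both $|a|$ and $|m-a|$ exceed $2|m|$. In the first two regions the splitting bound applies and gives $|m|^{-1}\sum_{1\le |b|\le 3|m|}(1+\ln 3|m|)^k/|b|\ll |m|^{-1}(1+\ln|m|)^{k+1}$. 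In the tail we have $|a|\asymp|m-a|$, so the terms are $\ll (1+\ln|a|)^k/a^2$ summed over $|a|>2|m|$, which is $\ll (1+\ln|m|)^k/|m|$.

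With the bound on $S_n$ in hand, the final sum is dominated by $\sum_{m\ne 0}(1+\ln|m|)^{n-2}/m^2<\infty$, which proves the theorem. I expect the main obstacle to be the convolution estimate itself, specifically handling the region near $a=m$ and the tail uniformly in $m$. I will handle it with the three-region split just described.
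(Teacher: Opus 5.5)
Your route differs from the paper's, and its analytic core, the three-region convolution estimate, is correct. The recursion you feed it into, however, is false once $n\ge 3$. Conditioning on $a=a_n$ gives
\begin{equation*}
S_{n+1}(m)=\sum_{a\ne 0}\frac{S_n(m-a)}{|a|}=\sum_{a\ne 0,\,m}\frac{S_n(m-a)}{|a|}+\frac{S_n(0)}{|m|}.
\end{equation*}
The value $a=m$ is admissible because $m\neq 0$. The term it contributes vanishes only for $n=2$, where $S_2(0)=0$. For $n\ge 3$ you have $S_n(0)=\overline{\zeta}_{n-1}(1,\ldots,1)>0$; for example $S_3(0)=2\zeta(2)$.

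So from the step $3\to 4$ on, your right-hand side is strictly smaller than $S_{n+1}(m)$, and the claimed upper bound does not follow as written. You cannot dismiss the omitted term by inspection, because $S_n(0)$ is a lower-depth copy of the very sum whose finiteness is at stake. The repair is a strong induction. The bound at level $n-1$ gives $S_n(0)=\sum_{b\ne 0}S_{n-1}(b)/|b|<\infty$. Hence the extra term is $O(1/|m|)$ and is absorbed into $C_{n+1}(1+\ln|m|)^{n-1}/|m|$. With that change your argument is complete.

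The paper avoids fiber sums altogether. It orders the entries by absolute value, losing a factor $n!$, and replaces $1/|a_n|$ for the largest entry by $1/|a_{n-1}|$. It then drops the linear constraint, which is harmless because $a_n$ is determined by the others. The resulting ordered sum $\sum_{1\le a_1\le\cdots\le a_{n-1}}(a_1\cdots a_{n-2}\,a_{n-1}^2)^{-1}$ is bounded by iterating $\sum_{k\ge p}k^{-2}\le 2/p$. This gives the explicit bound $n!\,2^{2n-3}\zeta(2)$.

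The paper's argument is shorter, has explicit constants, and has no analogue of the $S_n(0)$ pitfall. Yours costs more but yields more: the sharp-order fiber estimate $S_n(m)\ll(1+\ln|m|)^{n-2}/|m|$. For instance, $S_3(m)\sim 4\ln|m|/|m|$. This shows, for example, that the sum stays finite even if the exponent on the constrained variable is lowered from $1$ to any $\varepsilon>0$.
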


\begin{proof}
We consider the case $n=1$ separately. There is:
\begin{equation*}
\overline{\zeta}_n(w_1) = \sum_{\substack{a_1 \in \Z^* \\ a_1 = 0}} \frac{1}{\left| a_1^{w_1}\right|} = 0
\end{equation*}
which implies that the sum converges. Now we consider $n \ge 2$. It is enough to verify the convergence for $w_1 = \ldots = w_k = 1$ which is the upper bound for all the other values of parameters $w_k$:
\begin{equation*}
\overline{\zeta}_n(w_1, \ldots, w_n) = \sum_{\substack{a_1, \ldots, a_n \in \Z^* \\ \sum_{k=1}^n a_k = 0}} \frac{1}{\left| \prod_{k=1}^n a_k^{w_k} \right|} \le \sum_{\substack{a_1, \ldots, a_n \in \Z^* \\ \sum_{k=1}^n a_k = 0}} \frac{1}{\left| \prod_{k=1}^n a_k \right|}
\end{equation*}
Let $\Omega = \left\{ a_1, \ldots, a_n \in \Z^*: \quad \sum_{k=1}^n a_k = 0 \right\}$ and $a = (a_1, \ldots, a_k) \in \Omega$ be an element which belongs to the sum. For each $a$ we rewrite the absolute value of the product in the denominator as the product of the absolute values. Then, we sort the indices of $a$ according to their absolute value in a non-decreasing order: $|a_1| \le \ldots \le |a_n|$. There are at most $n!$ permutations of the indices of $a$ which all provide the same value of the summed element. The number of permutations can be smaller in case of duplicates among indices of $a$. Hence, the following upper bound holds:
\begin{equation*}
\sum_{\substack{a_1, \ldots, a_n \in \Z^* \\ \sum_{k=1}^n a_k = 0}} \frac{1}{\left| \prod_{k=1}^n a_k \right|} \le n! \sum_{\substack{a \in \Omega \\ |a_1| \le \ldots \le |a_n|}} \frac{1}{\prod_{k=1}^n |a_k|}
\end{equation*}
By definition, element $a_n$ is not smaller than $a_{n-1}$ hence $\frac{1}{a_n}$ is not greater than $\frac{1}{a_{n-1}}$. We can eliminate element $a_n$ from the sum completely and drop the condition $a_n = -\sum_{k=1}^{n-1} a_k$ to obtain the following upper bound:
\begin{equation*}
n! \sum_{\substack{a \in \Omega \\ |a_1| \le \ldots \le |a_n|}} \frac{1}{\prod_{k=1}^n |a_k|} \le n! \sum_{\substack{a_1, \ldots, a_{n-1} \in \Z^* \\ |a_1| \le \ldots \le |a_{n-1}|}} \frac{1}{|a_1| |a_2| \cdots |a_{n-2}||a_{n-1}|^2}
\end{equation*}
For each of $2^{n-1}$ combinations of the signs of $a_k$, the value of the summed element remains the same. Therefore, we can take only the positive elements $a_k$ and multiply the sum by $2^{n-1}$:
\begin{equation*}
n! \sum_{\substack{a_1, \ldots, a_{n-1} \in \Z^* \\ |a_1| \le \ldots \le |a_{n-1}|}} \frac{1}{|a_1| |a_2| \cdots |a_{n-2}||a_{n-1}|^2} = n! 2^{n-1} \sum_{a_1=1}^\infty \sum_{a_2=a_1}^\infty \ldots \sum_{a_{n-1}=a_{n-2}}^\infty \frac{1}{a_1 a_2 \cdots a_{n-2} a_{n-1}^2}
\end{equation*}
Using the following estimate
\begin{equation*}
\sum_{k=p}^\infty \frac{1}{k^2} \le \frac{1}{p^2} + \int_p^\infty \frac{1}{t^2} \D{t} = \frac{1}{p^2} + \frac{1}{p} \le \frac{2}{p}
\end{equation*}
we simplify further by evaluating sums one by one starting from the inner sum to the most outer sum which is evaluated exactly:
\begin{align*}
& n! 2^{n-1} \sum_{a_1=1}^\infty \sum_{a_2=a_1}^\infty \ldots \sum_{a_{n-1}=a_{n-2}}^\infty \frac{1}{a_1 a_2 \cdots a_{n-2} a_{n-1}^2} = n! 2^{n-1} 2^1 \sum_{a_1=1}^\infty \sum_{a_2=a_1}^\infty \ldots \sum_{a_{n-2}=a_{n-3}}^\infty \frac{1}{a_1 \cdots a_{n-2}^2} \\ & = \cdots = n! 2^{n-1} 2^{n-2} \sum_{a_1=1}^\infty \frac{1}{a_1^2}
\end{align*}
The last sum converges to $\zeta(2)$, therefore we have the final upper bound:
\begin{equation*}
\overline{\zeta}_n(w_1, \ldots, w_n) \le n! 2^{2n-3} \zeta(2) < \infty
\end{equation*}
Which completes the proof.
\end{proof}

In this paper we will evaluate the following special case:
\begin{equation}\label{eqn_def}
\overline{\zeta}_n = \overline{\zeta}_n(1, \ldots, 1) = \sum_{\substack{a_1, \ldots, a_n \in \Z^* \\ \sum_{k=1}^n a_k = 0}} \frac{1}{\left| \prod_{k=1}^n a_k \right|}
\end{equation}

Which by \eqref{eqn_generic_def_expanded} is equal to
\begin{equation}\label{eqn_def_expanded}
\overline{\zeta}_n = \sum_{\substack{a_1, \ldots, a_{n-1} \in \Z^* \\ \sum_{k=1}^{n-1} a_k \ne 0}} \frac{1}{\left| \prod_{k=1}^{n-1} a_k \right| \left| \sum_{k=1}^{n-1} a_k \right|}
\end{equation}


\begin{theorem}\label{thm_main}
When the depth and the weight of the symmetrized Mordell-Tornheim function are equal, that is for $\overline{\zeta}_n(1, \ldots, 1)$, it can be evaluated as
\begin{equation*}
\overline{\zeta}_n(1, \ldots, 1) = B_n(f^{(1)}(0), \ldots, f^{(n)}(0))
\end{equation*}
where $B_n$ is $n$-th complete exponential Bell polynomial and $f^{(n)}(0)$ is $n$-th derivative at $x=0$ of function $f(x)$ defined as:
\begin{equation*}
f(x) = \ln \binom{-2x}{-x}
\end{equation*}
Additionally, the value can be expressed using the following polynomials with positive integer coefficients over the values of zeta function:
\begin{equation*}
\overline{\zeta}_n(1, \ldots, 1) = B_n(0, (2^2 - 2) \Gamma(2) \zeta(2), \ldots, (2^n - 2) \Gamma(n) \zeta(n))
\end{equation*}
or equivalently, over the values of eta function:
\begin{equation*}
\overline{\zeta}_n(1, \ldots, 1) = B_n(0, 2^2 \Gamma(2) \eta(2), \ldots, 2^n \Gamma(n) \eta(n))
\end{equation*}.
\end{theorem}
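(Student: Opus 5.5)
The plan is to encode the constraint $a_1+\cdots+a_n=0$ by integrating over the circle, so that $\overline{\zeta}_n$ becomes the $n$-th moment of one explicit function. Then I identify the exponential generating function of these moments as $\binom{-2x}{-x}$.

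\emph{Step 1: integral representation.} For $0<r<1$ put
\begin{equation*}
g_r(\theta)=\sum_{a\in\Z^*}\frac{r^{|a|}}{|a|}\E^{ia\theta}=-\ln\left|1-r\E^{i\theta}\right|^2 ,
\end{equation*}
which tends to $g(\theta)=-2\ln|2\sin(\theta/2)|$ as $r\to1^-$ for $\theta\ne0$. The series for $g_r$ converges absolutely, so expanding $g_r^n$ and using orthogonality of $\E^{ia\theta}$ gives
\begin{equation*}
\frac{1}{2\pi}\int_0^{2\pi} g_r(\theta)^n \D{\theta}=\sum_{\substack{a_1,\ldots,a_n\in\Z^*\\ \sum a_k=0}}\frac{r^{|a_1|+\cdots+|a_n|}}{|a_1\cdots a_n|}.
\end{equation*}
As $r\to1^-$, the right side tends to $\overline{\zeta}_n$ by monotone convergence; all terms are positive and the limit is finite by the convergence theorem above.

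On the left I need dominated convergence. Since $|1-r\E^{i\theta}|\ge|\sin\theta|$ and $|1-r\E^{i\theta}|\le 2$, we have $|g_r(\theta)|\le C+2|\ln|\sin\theta||$. Every power of this bound is integrable, so the left side tends to $\frac{1}{2\pi}\int_0^{2\pi}g^n$. This gives
\begin{equation*}
\overline{\zeta}_n=\frac{(-2)^n}{2\pi}\int_0^{2\pi}\ln^n\left|2\sin\tfrac{\theta}{2}\right|\D{\theta}.
\end{equation*}
I expect this justification of the limit to be the main technical obstacle. Everything after it is classical.

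\emph{Step 2: generating function.} For $|x|<1/2$, the bound $\E^{|x||g|}\le C'|\sin(\theta/2)|^{-2|x|}$ is integrable. Hence summation and integration may be interchanged:
\begin{equation*}
\sum_{n\ge0}\overline{\zeta}_n\frac{x^n}{n!}=\frac{1}{2\pi}\int_0^{2\pi}\left|2\sin\tfrac{\theta}{2}\right|^{-2x}\D{\theta}.
\end{equation*}
The classical Wallis/Beta evaluation $\frac{1}{2\pi}\int_0^{2\pi}|2\sin(\theta/2)|^{s}\D{\theta}=\frac{\Gamma(1+s)}{\Gamma(1+s/2)^2}=\binom{s}{s/2}$ holds for $\mathrm{Re}\, s>-1$. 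With $s=-2x$ it gives $\binom{-2x}{-x}=\E^{f(x)}$.

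\emph{Step 3: Bell polynomials.} The exponential generating function identity $\exp\left(\sum_{k\ge1}c_k x^k/k!\right)=\sum_n B_n(c_1,\ldots,c_n)x^n/n!$, applied with $c_k=f^{(k)}(0)$, yields the first formula.

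\emph{Step 4: the coefficients $f^{(k)}(0)$.} Write $f(x)=\ln\Gamma(1-2x)-2\ln\Gamma(1-x)$ and use the Taylor series $\ln\Gamma(1-z)=\gamma z+\sum_{k\ge2}\zeta(k)z^k/k$. This gives
\begin{equation*}
f(x)=\gamma(2-2)x+\sum_{k\ge2}\frac{(2^k-2)\zeta(k)}{k}x^k .
\end{equation*}
Hence $f'(0)=0$ and $f^{(k)}(0)=(2^k-2)\Gamma(k)\zeta(k)$ for $k\ge2$, which is the zeta form. Finally, $\eta(k)=(1-2^{1-k})\zeta(k)$ gives $(2^k-2)\zeta(k)=2^k\eta(k)$, which is the eta form.

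The arguments of $B_n$ are nonnegative multiples of zeta values with integer coefficients, and $B_n$ has positive integer coefficients. This settles the positivity and integrality claims.
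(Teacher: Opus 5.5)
Your proof is correct, and it reaches the same generating function $\Gamma(1-2x)/\Gamma(1-x)^2$ by a genuinely different route.

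The paper works on the coefficient side. It splits the sum by sign patterns and writes each sector as $\sigma(p,m)=\sum_s h_p(s)h_{m+1}(s)$ with $h_p(s)=\frac{p!}{s!}{s \brack p}$. This gives $\overline{\zeta}_n=n!\sum_{s\ge1}(s!)^{-2}[x^n]\bigl(x^{(s)}\bigr)^2$, and the paper then sums $\sum_s\bigl(x^{(s)}/s!\bigr)^2={}_2F_1(x,x;1;1)$ by Gauss's theorem. You instead impose $a_1+\cdots+a_n=0$ through orthogonality on the circle. That makes $\overline{\zeta}_n$ the $n$-th moment of $-2\ln|2\sin(\theta/2)|$, and you evaluate the generating function as a Wallis/Beta integral.

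The two arguments are Parseval duals of each other.
\begin{itemize}
\item Since $(1-\E^{i\theta})^{-x}=\sum_{s\ge0}\frac{x^{(s)}}{s!}\E^{is\theta}$, your integral $\frac{1}{2\pi}\int_0^{2\pi}|1-\E^{i\theta}|^{-2x}\D{\theta}$ equals the paper's $\sum_s\bigl(x^{(s)}/s!\bigr)^2$ for real $|x|<\frac12$.
\item Expanding $g^n=\bigl(-\ln(1-\E^{i\theta})-\ln(1-\E^{-i\theta})\bigr)^n$ binomially and applying Parseval term by term gives $\sum_p\binom{n}{p}\sum_s h_p(s)h_{n-p}(s)$. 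This is exactly the paper's Stirling-number series, so the paper's sign bookkeeping is your expansion read on the coefficient side.
\end{itemize}

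What your version buys:
\begin{itemize}
\item It is shorter, and its analysis is cleaner. The limit $r\to1^-$ and the swap of $\sum_n$ with $\int$ are controlled by explicit dominating functions, and finiteness of $\overline{\zeta}_n$ comes for free.
\item By contrast, the paper's interchange of $[x^n]$ with $\sum_s$ needs uniform convergence on $|x|\le\rho<\frac12$, which it only sketches. At $x=\frac12$ the terms are $\binom{2s}{s}^2/16^s\sim 1/(\pi s)$, which are not summable, so the estimate given there does not hold as written.
\item Your route exhibits $\overline{\zeta}_n$ as the log-sine moment $\frac{(-2)^n}{\pi}\int_0^{\pi}\ln^n\bigl(2\sin\frac{\theta}{2}\bigr)\D{\theta}$.
\item With the same $r\to1^-$ argument, it extends directly to arbitrary weights as $\frac{1}{2\pi}\int_0^{2\pi}\prod_k\bigl(\sum_{a\ne0}\E^{ia\theta}|a|^{-w_k}\bigr)\D{\theta}$.
\end{itemize}

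The paper's route, in exchange, produces the intermediate series $n!\sum_s c_n(s)/(s!)^2$ and the sector sums $\sigma(p,m)$. These carry finer information than the total.
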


\begin{proof}
We start by noting that in the sum \eqref{eqn_def_expanded} many elements are equal. In particular, for all combinations of the signs of $a_k$, the absolute value of the product in the denominator remains the same. In the case of the absolute value of the sum in the denominator, it does not change as long as the number $p$ of positive or equivalently, the number $m$ negative elements $a_k$, remains the same. We split the sum according to the number $p$ of positive values of $a_k$. 

The number of elements with $p$ positive signs is equal to $\binom{n-1}{p}$. With this observation we can write
\begin{equation*}
\overline{\zeta}_n = \sum_{p=0}^{n-1} \binom{n-1}{p} \sum_{\substack{l_1, \ldots, l_{n-1} \in \Z^+ \\ s_1, \ldots, s_{p} = 1 \\ s_{p+1}, \ldots, s_{n-1} = -1 \\ \sum_{k=1}^{n-1} s_k l_k \ne 0}} \frac{1}{\left(\prod_{k=1}^{n-1} l_k \right) \left| \sum_{k=1}^{n-1} s_k l_k \right|}
\end{equation*}
where $a_k = s_k l_k$ and vector $s$ is the \emph{signature} of an element $a$.

The remaining absolute value in the denominator can be reduced by observing that an element with a signature $s$ is included in the sum if and only if the corresponding element with signature $-s$ is also included. In other words, the sum is symmetric by the signature of the elements.

Next, for each $l$, either $s$ or $-s$ signature element is selected so that the sum $\sum_{k=1}^{n-1} s_k l_k$ is always positive. Hence, there is
\begin{equation*}
\overline{\zeta}_n = 2 \sum_{p=0}^{n-1} \binom{n-1}{p} \sigma(p, n-1-p)
\end{equation*}
where the new function $\sigma$ is defined as
\begin{equation}\label{eqn_sigma_def}
\sigma(p, m) = \sum_{\substack{l_1, \ldots, l_{p+m} \in \Z^+ \\ s_1, \ldots, s_{p} = 1 \\ s_{p+1}, \ldots, s_{p+m} = -1 \\ \sum_{k=1}^{p+m} s_k l_k > 0}} \frac{1}{\left(\prod_{k=1}^{p+m} l_k \right) \left( \sum_{k=1}^{p+m} s_k l_k \right)}
\end{equation}

We calculate the boundary cases first. It is easy to find the value of $\sigma(0, m)$:
\begin{equation*}
\sigma(0, m) = \sum_{\substack{l_1, \ldots, l_m \in \Z^+ \\ s_1, \ldots, s_m = -1 \\ \sum_{k=1}^m s_k l_k > 0}} \frac{1}{\left(\prod_{k=1}^m l_k \right) \left( \sum_{k=1}^m s_k l_k \right)}
\end{equation*}
For $s_k = -1$ the sum $\sum_{k=1}^m s_k l_k < 0$ which contradicts the condition $\sum_{k=1}^m s_k l_k > 0$ and hence the sum is equal to zero:
\begin{equation}\label{eqn_sigma_0_m}
\sigma(0, m) = 0
\end{equation}

For the opposite case $\sigma(p, 0)$, there is:
\begin{equation*}
\sigma(p, 0) = \sum_{\substack{l_1, \ldots, l_p \in \Z^+ \\ s_1, \ldots, s_p = 1 \\ \sum_{k=1}^p s_k l_k > 0}} \frac{1}{\left(\prod_{k=1}^p l_k \right) \left( \sum_{k=1}^p s_k l_k \right)} = \sum_{l_1, \ldots, l_p \in \Z^+} \frac{1}{\left(\prod_{k=1}^p l_k \right) \left( \sum_{k=1}^p l_k \right)}
\end{equation*}
It can be calculated as follows:
\begin{align*}
& \sum_{l_1, \ldots, l_p \in \Z^+} \frac{1}{\left(\prod_{k=1}^p l_k \right) \left( \sum_{k=1}^p l_k \right)} = \sum_{l_1, \ldots, l_p \in \Z^+} \frac{1}{\prod_{k=1}^p l_k} \int_0^1 t^{\left(\sum_{k=1}^p l_k\right) - 1} \D{t} \\
& = \int_0^1 \frac{1}{t} \left( \sum_{l_1 \in \Z^+} \frac{t^{l_1}}{l_1} \right) \cdots \left( \sum_{l_p \in \Z^+} \frac{t^{l_p}}{l_p} \right) \D{t} = \int_0^1 \frac{1}{t} \left(-\ln(1-t)\right)^p \D{t} \\
& = \int_0^{\infty} \frac{\E^{-x}}{1 - \E^{-x}} x^p \D{t} = \int_0^{\infty} \frac{x^p}{\E^x - 1} \D{t}
\end{align*}
which is a well known integral equal to
\begin{equation}\label{eqn_sigma_p_0}
\sigma(p, 0) = \Gamma(p+1)\zeta(p+1)
\end{equation}

Next, assuming $p > 0$ and $m > 0$, we calculate the generic case. We separate the positive and the negative signs in the signature of $\sigma$:
\begin{equation}\label{eqn_sigma_separated}
\sigma(p, m) = \sum_{\substack{l_1, \ldots, l_{p+m} \in \Z^+ \\ \sum_{k=1}^{p} l_k - \sum_{k=p+1}^{p+m} l_k > 0}} \frac{1}{\left(\prod_{k=1}^{p+m} l_k \right) \left( \sum_{k=1}^p l_k - \sum_{k=p+1}^{p+m} l_k \right)}
\end{equation}

Let $d$ be the equal to the difference in the following sums:
\begin{equation}\label{eqn_d_def}
d = \sum_{k=1}^p l_k - \sum_{k=p+1}^{p+m} l_k
\end{equation}
Equation \eqref{eqn_d_def} increases the number of unknowns which is compensated by additional conditions $d > 0$ and $\sum_{k=1}^p l_k = d + \sum_{k=p+1}^{p+m} l_k$, hence:
\begin{equation}\label{eqn_sigma_separated_simplified}
\sigma(p, m) = \sum_{\substack{l_1, \ldots, l_{p+m}, d \in \Z^+ \\ \sum_{k=1}^p l_k = d + \sum_{k=p+1}^{p+m} l_k}} \frac{1}{\left(\prod_{k=1}^{p+m} l_k \right) d}
\end{equation}

It can be seen that all $l_k$ and $d$ play the same role in the formula and within the sum condition, LHS and RHS sums are independent in respect of the $l_k$ subsets they involve. Introduce a summation index $s$, equal to the value of both LHS and RHS:
\begin{equation*}
s = \sum_{k=1}^p l_k = d + \sum_{k=p+1}^{p+m} l_k
\end{equation*}
The smallest achievable value of $s$ is $1$ when $d = 1$ and $m = 0$. We split the sum into a product of sums according to the summation index $s \in \Z^+$, assuming that LHS and RHS are independent and the inner product is also split:
\begin{equation*}
\sigma(p, m) = \sum_{s=1}^{\infty} \left( \sum_{\substack{l_1, \ldots, l_p \in \Z^+ \\ \sum_{k=1}^p l_k = s}} \frac{1}{\prod_{k=1}^{p+m} l_k } \right) \left( \sum_{\substack{l_{p+1}, \ldots, l_{p+m}, d \in \Z^+ \\ \sum_{k=p+1}^{p+m} l_k = s}} \frac{1}{\left(\prod_{k=p+1}^{p+m} l_k \right) d} \right)
\end{equation*}
It is now possible recognize that both sums are cases of the same generic sum $h_n(s)$:
\begin{equation*}
h_n(s) = \sum_{\substack{l_1, \ldots, l_n \in \Z^+ \\ \sum_{k=1}^n l_k = s}} \frac{1}{\prod_{k=1}^n l_k}
\end{equation*}
which implies:
\begin{equation*}
\sigma(p, m) = \sum_{s=1}^{\infty} h_p(s) h_{m+1}(s)
\end{equation*}

Next, we evaluate sums $h_n(s)$ using generating functions. We introduce a generating function $G_n(s)$ which coefficients span the values of $h_n(s)$ for the consecutive powers $x^s$:
\begin{equation}\label{eqn_gen_g_def}
G_n(x) = \sum_{s=n}^{\infty} h_n(s) x^s
\end{equation}
In the definition of $G_n$ the sum starts from index $n$ because all the previous coefficients are equal to $0$. Inserting the definition of $h_n(s)$ one gets:
\begin{equation*}
G_n(s) = \sum_{s=n}^{\infty} \left( \sum_{\substack{l_1, \ldots, l_n \in \Z^+ \\ \sum_{k=1}^n l_k = s}} \frac{1}{\prod_{k=1}^n l_k} \right) x^s
\end{equation*}
The condition $\sum_{k=1}^n l_k = s$ from the inner sum implies that the most of the coefficients of $G_n(s)$ are equal to zero, hence:
\begin{equation*}
G_n(s) = \sum_{l_1, \ldots, l_n \in \Z^+} \frac{1}{\prod_{k=1}^n l_k} x^{\sum_{k=1}^n l_k}
\end{equation*}
This sum can be further simplified
\begin{align*}
G_n(s) & = \sum_{l_1, \ldots, l_n \in \Z^+} \frac{x^{l_1}}{l_1} \cdots \frac{x^{l_n}}{l_n} = \left( \sum_{l_1 \in \Z^+} \frac{x^{l_1}}{l_1} \right) \cdots \left( \sum_{l_n \in \Z^+} \frac{x^{l_n}}{l_n} \right) \\
& = \left( \sum_{l \in \Z^+} \frac{x^{l}}{l} \right)^n = \left( -\ln(1 - x) \right)^n
\end{align*}

\section{Use of unsigned Stirling numbers of the first kind}

A definition of unsigned String numbers of the first kind is
\begin{equation}\label{eqn_stirling_first_kind}
\frac{1}{n!}\left(-\ln(1 - x)\right)^n = \sum_{s=n}^{\infty} {s \brack n } \frac{x^s}{s!}
\end{equation}
Using the above definition, we can write
\begin{equation*}
h_n(s) = \frac{n!}{s!} { s \brack n }
\end{equation*}
Which further implies that
\begin{equation}\label{eqn_sigma_final}
\sigma(p, m) = \sum_{s=1}^{\infty} h_p(s) h_{m+1}(s) = p! (q+1)! \sum_{s=1}^{\infty} \frac{{ s \brack p }{ s \brack m + 1 }}{(s!)^2}
\end{equation}
The above formula was derived for $p > 0$ and $m > 0$. The two missing cases will be verified against the previously evaluated special cases.

For $p = 0$, there is
\begin{equation*}
\forall_{n \in \Z^+} {n \brack 0} = 0
\end{equation*}
so the sum zeroes and matches the previously calculated value \eqref{eqn_sigma_0_m}.

Similarly, for $m = 0$, there is
\begin{equation*}
\sigma(p, 0) = p! \sum_{s=1}^{\infty} \frac{{ s \brack p }{ s \brack 1 }}{(s!)^2}
\end{equation*}
using ${s \brack 1} = (s - 1)!$ one simplifies further
\begin{equation*}
\sigma(p, 0) = p! \sum_{s=1}^{\infty} \frac{{ s \brack p } (s - 1)!}{(s!)^2} = p! \sum_{s=1}^{\infty} \frac{{ s \brack p } }{s (s!)}
\end{equation*}
Using a classical result on Stirling numbers of the second kind from \cite[pp.194, eqn. 11]{Jordan1950}:
\begin{equation*}
\sum_{s=1}^{\infty} \frac{{ s \brack p } }{s (s!)} = \zeta(p + 1)
\end{equation*}
we obtain
\begin{equation*}
\sigma(p, 0) = \Gamma(p + 1) \zeta(p + 1)
\end{equation*}
which also matches the previously calculated \eqref{eqn_sigma_p_0}. In the result, formula \eqref{eqn_sigma_final} is well defined for all $p \ge 0$ and $m \ge 0$.

Using the formula \eqref{eqn_sigma_final}, we rewrite the main sum of $\overline{\zeta}_n$ as
\begin{align*}
\overline{\zeta}_n & = 2 \sum_{p=0}^{n-1} \binom{n-1}{p} \sigma(p, n-1-p) \\ 
& = 2 \sum_{p=0}^{n-1} \binom{n-1}{p} p! (n-1-p+1)! \sum_{s=1}^{\infty} \frac{{ s \brack p }{ s \brack n-1-p + 1 }}{(s!)^2} \\
& = 2 (n-1)! \sum_{p=0}^{n-1} (n-p) \sum_{s=1}^{\infty} \frac{{ s \brack p }{ s \brack n-p }}{(s!)^2} \\
& = 2 (n-1)! \sum_{s=1}^{\infty} \frac{1}{(s!)^2} \sum_{p=0}^{n-1} (n-p) { s \brack p }{ s \brack n-p }
\end{align*}
In the inner sum, we add the missing element $p = n$ for which the value is zero. Then, we make the formula symmetrized by reversing the order of summation:
\begin{equation*}
\sum_{p=0}^n (n-p) { s \brack p }{ s \brack n-p } = \sum_{p=0}^n p { s \brack p }{ s \brack n-p } = \frac{n}{2} \sum_{p=0}^n { s \brack p }{ s \brack n-p }
\end{equation*}
Therefore, the symmetrized formula for $\overline{\zeta}_n$ becomes:
\begin{equation*}
\overline{\zeta}_n = n! \sum_{s=1}^{\infty} \frac{1}{(s!)^2} \sum_{p=0}^n { s \brack p }{ s \brack n-p }
\end{equation*}

\section{Use of Pochhammer symbol}

Let $c_n(s)$ be defined as
\begin{equation}\label{eqn_c_def}
c_n(s) = \sum_{p=0}^n { s \brack p }{ s \brack n-p }
\end{equation}

Unsigned Stirling numbers of the first kind are related to the Pochhammer symbol (rising factorial) via the following generating function
\begin{equation*}
x^{(s)} = \sum_{k=0}^s { s \brack k } x^k
\end{equation*}

We start by evaluating the squared Pochhammer symbol as
\begin{align*}
\left( x^{(s)} \right)^2 & = \left( \sum_{k=0}^s { s \brack k } x^k \right) \left( \sum_{l=0}^s { s \brack l } x^l \right) = \sum_{k=0}^{s+s} \left( \sum_{p=0}^k { s \brack p } { s \brack k - p } x^p x^{k - p} \right) \\
& = \sum_{k=0}^{2 s} \left( \sum_{p=0}^k { s \brack p } { s \brack k - p } \right) x^k = \sum_{k=0}^{2 s} c_k(s) x^k
\end{align*}
We see that the square of Pochhammer symbol is precisely the generating function for values $c_n(s)$, hence
\begin{equation}\label{eqn_c_as_poch_sqr}
c_n(s) = \left[ x^n \right] \left( x^{(s)} \right)^2
\end{equation}
Using the above formula, we further simplify $\overline{\zeta}_n$ to
\begin{equation*}
\overline{\zeta}_n = n! \sum_{s=1}^{\infty} \frac{1}{(s!)^2} \left[ x^n \right] \left( x^{(s)} \right)^2 = n! \sum_{s=1}^{\infty} \left[ x^n \right] \left( \frac{x^{(s)}}{s!} \right)^2
\end{equation*}
To sum the generating functions and extract $\left[ x^n \right]$ outside of the sum, we first check the convergence domain where the swap is possible. There is:
\begin{equation*}
\left( \frac{x^{(s)}}{s!} \right)^2 = \left(\frac{x(x+1)(x+2)\cdots(x+s-1)}{s!}\right)^2
\end{equation*}
It is enough to select a domain where the growth rate is of the order smaller than $O(s^{-2})$. Taking $x = \frac{1}{2}$, the numerator becomes a function of order $\frac{s!!}{2^s}$. Diving the value by denominator one gets:
\begin{equation*}
\left( \frac{ \frac{s!!}{2^s} }{s!} \right)^2 = \frac{1}{4^s(s-1)!} < \frac{1}{s^2}
\end{equation*}
Assuming domain $0 \le x < \frac{1}{2}$, one can write
\begin{equation}\label{eqn_zn_in_domain}
\overline{\zeta}_n = n! \left[ x^n \right] \sum_{s=1}^{\infty} \left( \frac{x^{(s)}}{s!} \right)^2, \quad |x| < \frac{1}{2}
\end{equation}
The series can be extended to also include element $s=0$ because the additional element does not change any of the coefficients for $ n \ge1$. We rewrite the series using Pochhammer symbols:
\begin{equation}\label{eqn_zn_almost_2f1}
\overline{\zeta}_n = n! \left[ x^n \right] \sum_{s=0}^{\infty} \left( \frac{x^{(s)}}{s!} \right)^2 = n! \left[ x^n \right] \sum_{s=0}^{\infty} \frac{x^{(s)} x^{(s)}}{1^{(s)}} \frac{1^s}{s!}
\end{equation}
Formula \eqref{eqn_zn_almost_2f1} is a special case of the hypergeometric series ${_2F_1}$:
\begin{equation*}
\overline{\zeta}_n = n! \left[ x^n \right] {_2F_1}(x;x;1;1)
\end{equation*}
Using Gauss hypergeometric theorem \cite[section 2.2]{Andrews1999}
\begin{equation*}
{_2F_1}(a;b;c;z) = \frac{\Gamma(c)\Gamma(c-a-b)}{\Gamma(c-a)\Gamma(c-b)}, \quad \Re(c) > \Re(a+b)
\end{equation*}
we simplify formula \eqref{eqn_zn_almost_2f1} to:
\begin{equation}\label{eqn_final_gen}
\overline{\zeta}_n = n! \left[ x^n \right] \frac{\Gamma(1) \Gamma(1-x-x)}{\Gamma(1-x)\Gamma(1-x)} = n! \left[ x^n \right] \frac{\Gamma(1-2x)}{\Gamma(1-x)^2}
\end{equation}
which is satisfied in the convergence domain \eqref{eqn_zn_in_domain} because $\Re(1) > \Re(\frac{1}{2} - \epsilon + \frac{1}{2} - \epsilon)$.

Our next step is to evaluate the coefficients of the generating function $\frac{\Gamma(1-2x)}{\Gamma(1-x)^2}$. We expand the function into the Taylor series around $x = 0$. To simplify the calculation, instead of directly expanding the generating function, first we expand the logarithm of the function and afterwards we take the exponential of the result.

There is
\begin{equation}\label{eqn_gen_log}
\ln \left(\frac{\Gamma(1-2x)}{\Gamma(1-x)^2}\right) = \ln \left(\Gamma(1-2x)\right) - 2 \ln \left(\Gamma(1-x)\right)
\end{equation}
Using the known \cite[section 1.2]{Andrews1999} Taylor series for $\ln \left( \Gamma(1-x) \right)$
\begin{equation*}
\ln \left( \Gamma(1-x) \right) = \gamma x + \frac{\zeta(2)}{2} x^2 + \frac{\zeta(3)}{3} x^3 + \ldots + \frac{\zeta(k)}{k} x^k + \ldots
\end{equation*}
we expand formula \eqref{eqn_gen_log} as
\begin{align}\label{eqn_gen_before_exp}
& \ln \left(\frac{\Gamma(1-2x)}{\Gamma(1-x)^2}\right) = \gamma 2 x + \frac{\zeta(2)}{2} (2x)^2 + \frac{\zeta(3)}{3} (2x)^3 + \ldots \\
& - 2 \left( \gamma x + \frac{\zeta(2)}{2} x^2 + \frac{\zeta(3)}{3} x^3 + \ldots \right) = \sum_{k=2}^{\infty} \left(2^k - 2\right) \frac{\zeta(k)}{k} x^k \notag
\end{align}
Next, we calculate the exponential of \eqref{eqn_gen_before_exp} as
\begin{equation}\label{eqn_gen_after_exp}
\exp \left( \sum_{k=2}^{\infty} \left(2^k - 2\right) \frac{\zeta(k)}{k} x^k \right) = \exp \left( \sum_{k=2}^{\infty} \left(2^k - 2\right) \Gamma(k) \zeta(k) \frac{x^k}{k!} \right)
\end{equation}
Then, using the definition of $n$-th complete exponential Bell polynomial
\begin{equation*}
\exp \left( \sum_{k=1}^{\infty} c_k \frac{x^k}{k!} \right) = \sum_{n=0}^{\infty} B_n(c_1, \ldots, c_n) \frac{x^n}{n!}
\end{equation*}
and the following coefficients
\begin{align*}
c_1 & = 0 \\
c_{k \ge 1} & = \left(2^k - 2\right) \Gamma(k) \zeta(k)
\end{align*}
we turn \eqref{eqn_gen_after_exp} into the following generating function
\begin{equation*}
\frac{\Gamma(1-2x)}{\Gamma(1-x)^2} = B_n(0, \left(2^2 - 2\right) \Gamma(2) \zeta(2), \ldots, \left(2^n - 2\right) \Gamma(n) \zeta(n)) \frac{x^n}{n!}
\end{equation*}
By plugging formula \eqref{eqn_gen_after_exp} into \eqref{eqn_final_gen}, we finally get the final equation by extracting the $n$-th coefficient:
\begin{align}\label{eqn_final_zeta}
\overline{\zeta}_n & = n! \left[ x^n \right] B_n(0, \left(2^2 - 2\right) \Gamma(2) \zeta(2), \ldots, \left(2^n - 2\right) \Gamma(n) \zeta(n)) \frac{x^n}{n!} \notag \\
& = B_n(0, \left(2^2 - 2\right) \Gamma(2) \zeta(2), \ldots, \left(2^n - 2\right) \Gamma(n) \zeta(n))
\end{align}
Using the relation $2^n \eta(n) = (2^n - 2) \zeta(n)$ we can also provide an alternative formulation
\begin{equation}\label{eqn_final_eta}
\overline{\zeta}_n = B_n(0, 2^2 \Gamma(2) \eta(2), \ldots, 2^n \Gamma(n) \eta(n))
\end{equation}

An additional expression can be found by noting that the arguments of Bernoulli polynomial are related to the generating function discussed above and can be extracted by evaluating the derivative of the generating function. Let $f(x)$ be defined as
\begin{align*}
f(x) & = \ln \left(\frac{\Gamma(1-2x)}{\Gamma(1-x)^2}\right) \\
& = 0 x^0 + 0 x^1 + \sum_{k=2}^{\infty} \left(2^k - 2\right) \Gamma(k) \zeta(k) \frac{x^k}{k!}
\end{align*}
To extract $k$-th coefficient, one can calculate the following derivative:
\begin{equation*}
c_k = \frac{f^{(k)}(0)}{k!}
\end{equation*}
which for $k \ge$ matches the values of the previously calculated coefficients $\left(2^k - 2\right) \Gamma(k) \zeta(k)$. The case $k=1$ needs to be calculated separately:
\begin{align*}
f^{(1)}(0) & = \lim_{x \to 0} \frac{\D{}}{\D{x}} \ln \left(\frac{\Gamma(1-2x)}{\Gamma(1-x)^2}\right) = \lim_{x \to 0} \frac{\D{}}{\D{x}} \left( \ln \Gamma(1-2x) - 2 \ln \Gamma(1-x)\right) \\
& = 2 \lim_{x \to 0} \left(\frac{\Gamma^{'}(1-x)}{\Gamma(1-x)} - \frac{\Gamma^{'}(1-2x)}{\Gamma(1-2x)}\right) = 2 \left(\frac{\Gamma^{'}(1)}{\Gamma(1)} - \frac{\Gamma^{'}(1)}{\Gamma(1)}\right) = 0
\end{align*}
Therefore one can write
\begin{equation}\label{eqn_final_alt}
\overline{\zeta}_n = B_n(f^{(1)}(0), \ldots, f^{(n)}(0))
\end{equation}
Function $f(x)$ can be further simplified as
\begin{equation*}
f(x) = \ln \left( \frac{\Gamma(1-2x)}{\Gamma(1-x)^2} \right) = \ln \left( \frac{\Gamma(-2x + 1)}{\Gamma(-x + 1)\Gamma(-2x - (-x) + 1)} \right) = \ln \binom{-2 x}{-x}
\end{equation*}
which completes the proof of the main theorem.
\end{proof}

\section{Conclusions}
Evaluation of the symmetrized Mordell-Tornheim function shows that the values are equal to the valuation of the complete exponential Bernoulli polynomial over the derivatives of the logarithm of the central binomial coefficient. This result suggests that more similar concise relations may exist for the values of the standard Mordell-Tornheim function or other MZV-like formulas.

\appendix

\section{Explicit values of $\overline{\zeta}_n(1, \ldots, 1)$ for small $n$}

The complete expansion of the Bell polynomials leads to the following explicit formulas for small values of $n$:
\begin{align*}
& \overline{\zeta}_1 = \sum_{\substack{a_1 \ne 0 \\ a_1 = 0}} \frac{1}{\left| a_1 \right|}
=
0 \\
& \overline{\zeta}_2 = \sum_{\substack{a_1, a_2 \ne 0 \\ a_1 + a_2 = 0}} \frac{1}{\left| a_1 a_2 \right|}
=
2 \zeta(2) \\
& \overline{\zeta}_3 = \sum_{\substack{a_1, a_2, a_3 \ne 0 \\ a_1 + a_2 + a_3 = 0}} \frac{1}{\left| a_1 a_2 a_3 \right|}
=
12 \zeta(3) \\
& \overline{\zeta}_4 = \sum_{\substack{a_1, \ldots, a_4 \ne 0 \\ a_1 + \ldots + a_4 = 0}} \frac{1}{\left| a_1 \cdots a_4 \right|}
=
12 \Big( \zeta(2)^2 + 7 \zeta(4) \Big) \\
& \overline{\zeta}_5 = \sum_{\substack{a_1, \ldots, a_5 \ne 0 \\ a_1 + \ldots + a_5 = 0}} \frac{1}{\left| a_1 \cdots a_5 \right|}
=
240 \Big( \zeta(2) \zeta(3) + 3 \zeta(5) \Big) \\
& \overline{\zeta}_6 = \sum_{\substack{a_1, \ldots, a_6 \ne 0 \\ a_1 + \ldots + a_6 = 0}} \frac{1}{\left| a_1 \cdots a_6 \right|}
=
120 \Big( \zeta(2)^3 + 12 \zeta(3)^2 + 21 \zeta(2) \zeta(4) + 62 \zeta(6) \Big) \\
& \overline{\zeta}_7 = \sum_{\substack{a_1, \ldots, a_7 \ne 0 \\ a_1 + \ldots + a_7 = 0}} \frac{1}{\left| a_1 \cdots a_7 \right|}
=
5040 \Big( \zeta(2)^2 \zeta(3) + 7 \zeta(3) \zeta(4) + 6 \zeta(2) \zeta(5) + 18 \zeta(7) \Big) \\
& \overline{\zeta}_8 = \sum_{\substack{a_1, \ldots, a_8 \ne 0 \\ a_1 + \ldots + a_8 = 0}} \frac{1}{\left| a_1 \cdots a_8 \right|}
=
1680 \Big( \zeta(2)^4 + 48 \zeta(2) \zeta(3)^2 + 42 \zeta(2)^2 \zeta(4) + 147 \zeta(4)^2 \\
& + 288 \zeta(3) \zeta(5) + 248 \zeta(2) \zeta(6) + 762 \zeta(8) \Big) \\
& \overline{\zeta}_9 = \sum_{\substack{a_1, \ldots, a_9 \ne 0 \\ a_1 + \ldots + a_9 = 0}} \frac{1}{\left| a_1 \cdots a_9 \right|}
=
120960 \Big( \zeta(2)^3 \zeta(3) + 4 \zeta(3)^3 + 21 \zeta(2) \zeta(3) \zeta(4) \\
& + 9 \zeta(2)^2 \zeta(5) + 63 \zeta(4) \zeta(5) + 62 \zeta(3) \zeta(6) + 54 \zeta(2) \zeta(7) + 170 \zeta(9) \Big) \\
& \overline{\zeta}_{10} = \sum_{\substack{a_1, \ldots, a_{10} \ne 0 \\ a_1 + \ldots + a_{10} = 0}} \frac{1}{\left| a_1 \cdots a_{10} \right|}
=
30240 \Big( \zeta(2)^5 + 120 \zeta(2)^2 \zeta(3)^2 + 70 \zeta(2)^3 \zeta(4) \\
& + 840 \zeta(3)^2 \zeta(4) + 735 \zeta(2) \zeta(4)^2 + 1440 \zeta(2) \zeta(3) \zeta(5) + 2160 \zeta(5)^2 + 620 \zeta(2)^2 \zeta(6) \\
& + 4340 \zeta(4) \zeta(6) + 4320 \zeta(3) \zeta(7) + 3810 \zeta(2) \zeta(8) + 12264 \zeta(10)
 \Big)
\end{align*}

The $10$-th order sum has an approximate value of $1.1828 \times 10^9$. The two of the above values can be calculated exactly, using only $\pi$ constant:
\begin{align*}
\sum_{\substack{a_1, a_2 \ne 0 \\ a_1 + a_2 = 0}} \frac{1}{\left| a_1 a_2 \right|} & = \frac{\pi^2}{3} \\
\sum_{\substack{a_1, a_2, a_3, a_4 \ne 0 \\ a_1 + a_2 + a_3 + a_4 = 0}} \frac{1}{\left| a_1 a_2 a_3 a_4 \right|} & = \frac{19}{15} \pi^4
\end{align*}

It seems likely, that the above cases and along with $\overline{\zeta}_1 = 0$ are \emph{the only values} that are rational multiplicities of $\pi^n$ for any of $\overline{\zeta}_{n \ge 1}$, hence:

\begin{conjecture}
The only three cases for which $\overline{\zeta}_{n \ge 1}$ is a rational multiplicity of $\pi^n$ are $n = 1$, $n=2$ and $n=4$ with the coefficients $0$, $\frac{1}{3}$ and $\frac{19}{15}$ respectively.
\end{conjecture}

The second observation is that likely the number of elements in the sum is always equal to the number of partitions of $n$ into parts containing at least two elements, i.e. equal to $p(n) - p(n-1)$ where $p$ is the partition function. In other words, none of the coefficients in the polynomial is equal to zero. Hence:

\begin{conjecture}
The value of $\overline{\zeta}_{n \ge 1}$ is a homogeneous polynomial with $p(n) - p(n-1)$ monomials where each monomial is a product of zeta function values with a total weight equal to $n$.
\end{conjecture}

The relation between $\overline{\zeta}_n$ and $\zeta(n)$ can be inverted giving the following formulas:
\begin{align*}
2 \zeta(2) = & \overline{\zeta}_2 \\
12 \zeta(3) = & \overline{\zeta}_3 \\
84 \zeta(4) = & \overline{\zeta}_4 - 3 \overline{\zeta}_2^2 \\
720 \zeta(5) = & \overline{\zeta}_5 - 10 \overline{\zeta}_2 \overline{\zeta}_3 \\
7440 \zeta(6) = & \overline{\zeta}_6 - 15 \overline{\zeta}_2 \overline{\zeta}_4 - 10 \overline{\zeta}_3^2 + 30 \overline{\zeta}_2^3 \\
90720 \zeta(7) = & \overline{\zeta}_7 - 21 \overline{\zeta}_2 \overline{\zeta}_5 - 35 \overline{\zeta}_3 \overline{\zeta}_4 + 210 \overline{\zeta}_2^2 \overline{\zeta}_3 \\
1280160 \zeta(8) = & \overline{\zeta}_8 - 28 \overline{\zeta}_2 \overline{\zeta}_6 - 56 \overline{\zeta}_3 \overline{\zeta}_5 - 35 \overline{\zeta}_4^2 + 420 \overline{\zeta}_2^2 \overline{\zeta}_4 + 560 \overline{\zeta}_2 \overline{\zeta}_3^2 - 630 \overline{\zeta}_2^4 \\
20563200 \zeta(9) = & \overline{\zeta}_9 - 36 \overline{\zeta}_2 \overline{\zeta}_7 - 84 \overline{\zeta}_3 \overline{\zeta}_6 - 126 \overline{\zeta}_4 \overline{\zeta}_5 + 756 \overline{\zeta}_2^2 \overline{\zeta}_5 + 560 \overline{\zeta}_3^3 - 7560 \overline{\zeta}_2^3 \overline{\zeta}_3 + 2520 \overline{\zeta}_2 \overline{\zeta}_3 \overline{\zeta}_4 \\
370863360 \zeta(10) = & \overline{\zeta}_{10} - 45 \overline{\zeta}_2 \overline{\zeta}_8 - 120 \overline{\zeta}_3 \overline{\zeta}_7 -210 \overline{\zeta}_4 \overline{\zeta}_6 + 1260 \overline{\zeta}_2^2 \overline{\zeta}_6 - 126 \overline{\zeta}_5^2 + 3150 \overline{\zeta}_2 \overline{\zeta}_4^2 + 4200 \overline{\zeta}_3^2 \overline{\zeta}_4 \\
& - 18900 \overline{\zeta}_2^3 \overline{\zeta}_4 - 37800 \overline{\zeta}_2^2 \overline{\zeta}_3^2 + 22680 \overline{\zeta}_2^5 + 5040 \overline{\zeta}_2 \overline{\zeta}_3 \overline{\zeta}_5
\end{align*}

Finally, the last observation is that it seems likely that the above formulas can be generalized for arbitrary $n$, hence:
\begin{conjecture}
Every zeta value $\zeta(n)$ for $n \ge 2$ can be written as a $Q$-linear combination of $\overline{\zeta}_{n \ge 2}$ values.
\end{conjecture}


\printbibliography

@article{Bachmann2021,
  author  = {Bachmann, Henrik},
  title   = {Finite and symmetric {M}ordell--{T}ornheim multiple zeta values},
  journal = {Journal of Number Theory},
  year    = {2021},
  volume  = {221},
  pages   = {161--174},
}

@book{Jordan1950,
  author = {Jordan, Charles},
  address = {New York},
  booktitle = {Calculus of finite differences},
  edition = {2nd ed.},
  language = {eng},
  publisher = {Chelsea},
  title = {Calculus of finite differences},
  year = {1950},
}

@book{Andrews1999,
  title={Special Functions},
  author={Andrews, George E. and Askey, Richard and Roy, Ranjan},
  series={Encyclopedia of Mathematics and its Applications},
  volume={71},
  year={1999},
  publisher={Cambridge University Press},
  address={Cambridge},
}

@article{Tornheim1950,
  author  = {Tornheim, Leonard},
  title   = {Harmonic double series},
  journal = {American Journal of Mathematics},
  year    = {1950},
  volume  = {72},
  number  = {2},
  pages   = {303--314},
}

@article{Mordell1958,
  author  = {Mordell, Louis J.},
  title   = {On the evaluation of some multiple series},
  journal = {Journal of the London Mathematical Society},
  year    = {1958},
  volume  = {33},
  number  = {3},
  pages   = {368--371},
}

@inproceedings{Matsumoto2003,
  author    = {Matsumoto, Kohji},
  title     = {On {M}ordell--{T}ornheim and other multiple zeta-functions},
  booktitle = {Proceedings of the Session in Analytic Number Theory and Diophantine Equations},
  year      = {2003},
  volume    = {360},
  series    = {Bonner Mathematische Schriften},
  publisher = {University of Bonn},
  address   = {Bonn},
  pages     = {17}
}

@article{MatsumotoEtal2008,
  author  = {Matsumoto, Kohji and Nakamura, Takashi and Ochiai, Hiroyuki and Tsumura, Hirofumi},
  title   = {On value-relations, functional relations and singularities of {M}ordell--{T}ornheim and related triple zeta-functions},
  journal = {Acta Arithmetica},
  year    = {2008},
  volume  = {132},
  number  = {2},
  pages   = {99--125},
}

@article{MatsumotoTsumura2006,
  author    = {Matsumoto, Kohji and Tsumura, Hirofumi},
  title     = {On {W}itten multiple zeta-functions associated with semisimple {L}ie algebras {I}},
  journal   = {Annales de l'institut Fourier},
  year      = {2006},
  volume    = {56},
  number    = {5},
  pages     = {1457--1504},
}

\end{document}